\documentclass[12pt]{article}
\usepackage{arxiv}
\usepackage[T1]{fontenc}
\usepackage{mathtools,amsthm,microtype,graphicx,lastpage}
\usepackage[nofontspec]{newtxtext}
\usepackage{newtxmath}
\makeatletter
\renewcommand{\normalsize}{\@setfontsize\normalsize{12}{15}\abovedisplayskip 10pt plus 2pt minus 3pt\belowdisplayskip 10pt plus 2pt minus 3pt\abovedisplayshortskip 3pt plus 2pt\belowdisplayshortskip 6pt plus 2pt minus 2pt}
\renewcommand{\small}{\@setfontsize\small{11}{13.5}}
\renewcommand{\footnotesize}{\@setfontsize\footnotesize{10}{12}}
\makeatother
\normalsize
\usepackage{tikz}
\usetikzlibrary{arrows.meta,calc,positioning}
\usepackage[hidelinks,
  bookmarksnumbered,pdfencoding=auto,psdextra]{hyperref}
\hypersetup{pdftitle={Finite presentations of metabelian groups: effective enumeration via Laurent relations},
  pdfauthor={Achyuth Jayadevan},
  pdfsubject={Effective enumeration of ordinary finite metabelian group presentations}}
\numberwithin{equation}{section}
\newtheorem{theorem}{Theorem}[section]
\newtheorem{proposition}[theorem]{Proposition}
\newtheorem{lemma}[theorem]{Lemma}

\theoremstyle{definition}

\theoremstyle{remark}

\newcommand{\Z}{\mathbb Z}

\newcommand{\R}{\mathbb R}
\newcommand{\N}{\mathbb N}
\newcommand{\nc}[1]{\langle\!\langle #1\rangle\!\rangle}
\newcommand{\gp}[1]{\langle #1\rangle}
\newcommand{\norm}[1]{\lVert #1\rVert}

\newcommand{\im}{\operatorname{im}}
\newcommand{\Met}{\mathcal M}
\newcommand{\laur}{\Z[t_1^{\pm1},\ldots,t_k^{\pm1}]}
\newcommand{\epito}{\twoheadrightarrow}
\allowdisplaybreaks[1]
\title{\bfseries Finite presentations of metabelian groups:\\ effective enumeration via Laurent relations}
\author{\href{https://orcid.org/0009-0008-8745-4078}{\raisebox{-1.2pt}{\includegraphics[height=11pt]{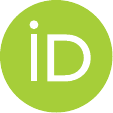}}\hspace{.45em}Achyuth Jayadevan}\\
  \href{https://www.manipal.edu/mit.html}{\raisebox{-1.5pt}{\includegraphics[height=11pt]{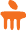}}}\hspace{.4em}Manipal Institute of Technology, MAHE\\
  \href{mailto:achyuth@jayadevan.in}{\textcolor{blue}{\texttt{achyuth@jayadevan.in}}}
}
\date{}
\renewcommand{\headeright}{}
\renewcommand{\undertitle}{}
\makeatletter
\renewcommand{\@maketitle}{\vbox{\hsize\textwidth\linewidth\hsize
    \vskip 0.1in
    \@toptitlebar
    \centering
    {\LARGE\scshape\@title\par}
    \@bottomtitlebar
    \vskip 8pt
    {\renewcommand{\arraystretch}{1.12}\begin{tabular}{c}\bfseries\@author\end{tabular}\par}
    \vskip 16pt}}
\makeatother
\renewcommand{\shorttitle}{\footnotesize Finite presentations of metabelian groups}
\begin{document}
\raggedbottom
\maketitle
\begin{abstract}
For an ordinary finite presentation $P=\langle x_1,\ldots,x_n\mid R\rangle$,
put $G(P)=F_n/\nc{R}$. We construct a primitive-recursive predicate $V$ with
\[
 G(P)''=1\quad\Longleftrightarrow\quad\exists c\in\N:\ V(P,c)=1.
\]
Thus finite presentations of metabelian groups are recursively enumerable,
answering Kourovka Problem~17.124. An effective form of the
Bieri--Strebel covering construction, using signed Laurent relations
and rational separation, gives a family of finitely presented metabelian groups
cofinal under epimorphisms. Products of conjugates of defining relators
witness these epimorphisms. The construction and enumeration theorem
are formalized in Lean~4.
\end{abstract}

\section{The enumeration theorem}
\label{sec:introduction}

Let $F_n=F(x_1,\ldots,x_n)$ and let $R$ be a finite list of words in
$F_n$. We consider ordinary presentations
\[
 P=\gp{x_1,\ldots,x_n\mid R},\qquad
 G(P)=F_n/\nc{R},\qquad
 \Met=\{P:G(P)''=1\}.
\]
The empty generating set is allowed. Encodings with a letter outside
$\{x_1,\ldots,x_n\}$ are rejected.

\begin{theorem}
\label{thm:main}
There is a primitive-recursive predicate $V$ on natural-number
encodings of presentations and finite witnesses such that
\begin{equation}
\label{eq:main}
 P\in\Met
 \quad\Longleftrightarrow\quad
 \exists c\in\N:\ V(P,c)=1.
\end{equation}
In particular, $\Met$ is recursively enumerable.
\end{theorem}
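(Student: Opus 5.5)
The approach is to use the Bieri--Strebel theory of finitely presented metabelian groups in an effective form. Every finitely generated metabelian group $G=G(P)$ is a quotient of the free metabelian group $F_n/F_n''$. If $G$ is finitely presented, there should be a finitely presented metabelian group $H$ from an explicitly enumerable family $\{H_j\}$ together with an epimorphism $H\epito G$ compatible with the generators. Conversely, any $G(P)$ that is a quotient of a metabelian $H_j$ is itself metabelian. So the witness $c$ will encode three things: an index $j$ of a member $H_j=\gp{x_1,\ldots,x_n\mid S_j}$ of the family (with $H_j''=1$ built in by construction); for each relator $s\in S_j$, an explicit expression of $s$ as a product of conjugates of elements of $R^{\pm1}$ in $F_n$; and, for each $r\in R$, nothing further.

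The predicate $V(P,c)$ then checks, by free reduction, that every relator of $H_j$ equals the claimed product of conjugates of relators of $P$. This gives $\nc{S_j}\subseteq\nc{R}$, so $G(P)$ is a quotient of $H_j$ and hence metabelian. Free reduction of words bounded by the code of $c$ is primitive recursive, and so is decoding $j\mapsto S_j$, provided the family is given by primitive-recursive data. This proves soundness.

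Completeness is the substantive step. Given $P\in\Met$, I need to find $j$ such that $H_j$ has the same generators and satisfies $\nc{S_j}\subseteq\nc{R}$; equivalently, $S_j$ holds in $G(P)$. Since $G(P)''=1$, every double commutator $[[u,v],[w,z]]$ lies in $\nc{R}$. The natural first candidate is $H=\gp{x\mid R\cup T}$, where $T$ is a finite set of double commutators making the group metabelian. That fails in general because $F_n''$ is not finitely normally generated. The Bieri--Strebel covering construction is the replacement. $G$ finitely presented forces $\Sigma$-conditions, namely the module $G'/G''$ over $\Z[G_{ab}]$ being tame, and this allows one to cover $G$ by a finitely presented metabelian group given by finitely many relators. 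These relators encode Laurent relations $f\cdot[x_a,x_b]$ in the module, with the $f\in\laur$ chosen so that the valuation sphere is covered. Rational separation certificates, namely rational points and signs witnessing that each character lies in some $\Sigma$-half, should make the tameness verifiable with finite data. The family $H_j$ is therefore indexed by finite sets of signed Laurent polynomials plus rational certificates, and I must prove two things. First, each such $H_j$ is metabelian; here I would add finitely many double commutators and argue via the Bieri--Strebel presentation theorem that the resulting group is metabelian and finitely presented. Second, cofinality: for every metabelian $G(P)$ some $H_j$ maps onto it, with the relators of $H_j$ valid in $G(P)$.

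I expect cofinality to be the main obstacle. Bieri--Strebel tameness of $G(P)$ is not given to us directly; we only know $G(P)''=1$ and that $G(P)$ is finitely presented. The plan is to invoke the theorem that finite presentability implies $\Sigma^c$ contains no antipodal pair. Compactness of the character sphere then gives finitely many Laurent relations and a rational covering, which determines a $j$. Once that $j$ exists, its relators hold in $G(P)$ and therefore lie in $\nc{R}$, so explicit conjugate-product expressions exist and can be found by search and folded into $c$. Recursive enumerability follows by dovetailing over $c$.
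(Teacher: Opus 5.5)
Your architecture matches the paper's: certified Bieri--Strebel covers, free-group derivations as products of conjugates, and soundness by passing to quotients. Your same-generator variant also works for $n\ge2$ if you take $t_i=x_i$ and the module letters to be the words $[x_a,x_b]$, which generate $G(P)'$ as a module. Plain derivations then replace the epimorphism witnesses of Lemma~\ref{lem:epi}.

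The genuine gap is the step you treat as routine, namely that each $H_j$ is metabelian ``by construction.'' Theorem~A of Bieri--Strebel is about a group already known to be metabelian. It does not say that a given finite presentation, with Laurent relators and some unspecified finite set of relators $[a,b^{q(v)}]$, defines a metabelian group. Nor does its statement give a computable bound on how many commutation relators suffice. For $V$ to be primitive recursive and sound, the commutation radius must be an explicit function of checkable data. You must then prove, in the finitely presented group itself, that commutation on that ball together with the Laurent relations propagates to all of $\Z^k$. Relatedly, finitely many ``rational points and signs'' cannot certify that the open cones $\{v:\min_{u\in S_\lambda}v\cdot u>0\}$ cover the whole real sphere. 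You need a quantitative margin, and that margin is also what controls the radius.

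The paper supplies exactly this.
The certificate is an integer $r$ with $\max_\lambda\min_{u\in S_\lambda}v\cdot u>2^{-r}$ on the $\ell^\infty$-sphere. It is checked face by face by Fourier--Motzkin elimination, where rational and real feasibility agree (Lemma~\ref{lem:margin}).
From $r$ come $C=2^{-r}/k$, $D=1+\sum_\lambda\sum_{u\in S_\lambda}\norm{u}_1$ and $R=1+2k(1+D+D^2k\,2^r)$. Put $\delta=\min\{C/4,1/(4k)\}$.
For $\rho\ge R$ and $\rho\le\norm v<\rho+\delta$, the support selected in direction $-v$ satisfies $\norm{v+u}<\rho$ for all its $u$ (Lemma~\ref{lem:shrink}).
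The collection lemma then gives $[a,b^{q(u)q(v)}]=1$. Conjugating the Laurent relation for $b$ by $q(v)$ gives $[a,b^{q(v)}]=1$; the negative sign is handled with inverse-ordered words.
Induction in steps of $\delta$ covers $\Z^k$, so $\nc{\mathcal A}$ is abelian with abelian quotient (Proposition~\ref{prop:sound}).

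Cofinality, which you flag as the main obstacle, is by contrast largely classical. The point you skip there is that $G_{ab}$ may have torsion, so the $\laur$-module structure must come from a surjection $\Z^k\epito G_{ab}$. The paper passes to the pullback $E=G\times_Q\Z^k$, which is finitely presented by Lemma~\ref{lem:central}, and applies Bieri--Strebel necessity to $E$. In your setting you could instead transfer tameness along $\Z^n\epito G_{ab}$, which works because the kernel acts trivially on $G'$.
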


This answers the enumeration question in Kourovka Problem~17.124
\cite{Notebook} and the enumeration and witness questions of Shpilrain
\cite[Problems~4 and~5]{Shpilrain}. The ordinary-presentation
recognition question also occurs in
\cite[Problem~42, arXiv version]{BPR}, with attribution to Miller
\cite{Miller}.

The covering presentations come from Bieri and Strebel
\cite[\S3.5, equations~(3.1)--(3.4), Lemma~3.5 and \S3.6]{BS}.
We make their finite data effectively recognizable and obtain ordinary
finite presentations $P_R$ satisfying
\begin{equation}
\label{eq:strategy}
 P_R''=1,\qquad
 G''=1,\ G\text{ finitely presented}
 \ \Longrightarrow\ \exists P_R\epito G.
\end{equation}
A finite witness for the epimorphism consists of identities in a free
group, each a product of conjugates of defining relators.
These two ingredients give \eqref{eq:main}.

The algorithms of Baumslag, Cannonito and Robinson
\cite[p.~630 and Theorems~2.1, 3.1]{BCR} take finite presentations in
the metabelian variety. Here the input is $F_n/\nc{R}$ in the variety
of all groups. The geometric and module-theoretic ingredients remain
classical; the construction makes their finite data sufficient for
ordinary-presentation enumeration.

For abelian-by-cyclic groups, recursive enumerability of ordinary finite
presentations is already recorded in the proof of
\cite[Lemma~11.4]{GMW}. Theorem~\ref{thm:main} treats arbitrary
metabelian groups. Its certificate gives explicit finite Laurent data
and normal-closure derivations witnessing metabelianity.

\section{Conventions and classical inputs}
\label{sec:classical}

We use right conjugation and right commutators:
\[
 a^w=w^{-1}aw,\qquad [a,b]=a^{-1}b^{-1}ab.
\]
A group $G$ is metabelian when its derived subgroup $G'$ is abelian,
equivalently when $[[a,b],[c,d]]=1$ for all $a,b,c,d\in G$.
Given letters $t_1,\ldots,t_k$, put
\[
 q(v)=t_1^{v_1}\cdots t_k^{v_k}\qquad(v\in\Z^k).
\]
We retain the literal inverse
\[
 q(v)^{-1}=t_k^{-v_k}\cdots t_1^{-v_1}.
\]
Unsubscripted norms are Euclidean.

\begin{lemma}[Bieri--Strebel collection]
\label{lem:collection}
Let $k\ge1$ and let $\mathcal A$ be a finite alphabet, with a designated letter $a_{ij}$
for each $1\le i<j\le k$. For $\rho>0$, let $H_\rho$ have generators
$t_1,\ldots,t_k$ and $\mathcal A$, and relators
\[
 [t_i,t_j]=a_{ij},\qquad
 [a,b^{q(v)}]=1
 \quad(a,b\in\mathcal A,\ v\in\Z^k,\ \norm v<\rho).
\]
Suppose $\rho>2k$ and $u,v\in\Z^k$ satisfy
\begin{equation}
\label{eq:collection}
 \norm u\le \frac{\rho}{2k},\qquad
 \norm v<\rho+\frac1{2k},\qquad
 \norm{u+v}<\rho.
\end{equation}
Then, for all $a,b\in\mathcal A$, the following hold in $H_\rho$:
\[
 [a,b^{q(u)q(v)}]=1,\qquad
 [a,b^{q(u)^{-1}q(v)^{-1}}]=1.
\]
If all ordered commutation relators are imposed, with no radius
restriction, the normal closure of $\mathcal A$ is abelian.
\end{lemma}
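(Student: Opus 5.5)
The plan is to compare the word $q(u)q(v)$, and likewise $q(u)^{-1}q(v)^{-1}$, with the monomial $q(u+v)$ by explicit collection. Each time a letter $t_j^{\pm1}$ is moved past a letter $t_i^{\pm1}$ with $i<j$, the relator $[t_i,t_j]=a_{ij}$ produces a factor $a_{ij}^{\pm1}$ conjugated by a word in the $t$'s. Collecting $q(u)q(v)$ into the normal order $t_1\cdots t_k$ should therefore give an identity in $H_\rho$ of the form
\[
 q(u)q(v)=q(u+v)\,c,\qquad c=\prod_{\ell} \bigl(a_{i_\ell j_\ell}^{\pm1}\bigr)^{q(w_\ell)} .
\]
I would set this up so that every intermediate conjugator is again a normal-ordered monomial $q(w_\ell)$. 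To achieve this, I would move the blocks $t_j^{v_j}$ leftwards one at a time, and record the standard formula for $t_i^{m}t_j^{n}$ as $t_j^{n}t_i^{m}$ times a product of conjugates of $a_{ij}^{\pm1}$ by $t_i^{r}t_j^{s}$ with $|r|\le|m|$ and $|s|\le|n|$.

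The main claim then reads
\[
 [a,b^{q(u)q(v)}]=[a,(b^{q(u+v)})^{c}].
\]
It suffices to show that $c$ commutes with $b^{q(u+v)}$, or equivalently that each factor $a_{ij}^{q(w_\ell)}$ commutes with $b^{q(u+v)}$. Once this holds, the identity collapses to the relator $[a,b^{q(u+v)}]=1$, since $\norm{u+v}<\rho$.

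Checking that each factor commutes with $b^{q(u+v)}$ amounts to conjugating the pair back by $q(w_\ell)$ and obtaining an instance $[a_{ij},b^{q(x)}]$, or its reverse $[b,a_{ij}^{q(x)}]$, with $\norm{x}<\rho$. The vector $x$ is $u+v-w_\ell$ up to reordering. The exponents $w_\ell$ produced by collection lie coordinatewise between partial sums of $u$ and $v$. This is where the numerical hypotheses enter. The bound $\norm u\le\rho/(2k)$ controls the displacement of each of the at most $k$ collection stages, and the slack $\rho+\frac1{2k}$ on $\norm v$ is absorbed by these displacements: each stage moves the endpoint by at most $\rho/(2k)$ in sup-norm, and the triangle inequality over the $k$ stages keeps every relevant difference strictly inside radius $\rho$. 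The inverse word $q(u)^{-1}q(v)^{-1}$ is handled identically, with all signs reversed; it is reduced to $q(-u-v)$ after reordering, and $-(u+v)$ has the same norm.

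I expect the main obstacle to be that conjugating back by $q(w_\ell)$ produces $q(u+v)q(w_\ell)^{-1}$, which is not literally a monomial. One therefore needs a smaller instance of the very statement being proved. I would organize the argument as an induction on $\norm{u}_1$, that is, on the number of $t$-letters being collected. At each step the auxiliary commutation needed involves a strictly shorter $u$-part, and this is exactly what the inductive hypothesis supplies. Should the strict bookkeeping fail, the fallback is to prove the two-letter case $u=\pm e_i$ first and then iterate, using the $\frac1{2k}$ margin to absorb the accumulated errors.

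For the final assertion, when all relators $[a,b^{q(v)}]$ are imposed, I would let $N$ be the subgroup generated by all $a^{q(w)}$ with $a\in\mathcal A$ and $w\in\Z^k$. First, $N$ is abelian: a generator pair $[a^{q(w)},b^{q(w')}]$ is a conjugate of $[a,b^{q(w')q(w)^{-1}}]$, and the unrestricted collection above rewrites $q(w')q(w)^{-1}$ as $q(w'-w)$ times an element of $N$. The resulting circularity is resolved by a joint induction on $\norm{w}_1+\norm{w'}_1$. Second, $N$ is normal, since $q(w)t_i^{\pm1}=q(w\pm e_i)\cdot n$ with $n\in N$, and conjugation by elements of $\mathcal A\subseteq N$ preserves the abelian group $N$. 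Finally, $N$ contains $\mathcal A$ and lies in its normal closure, so $N=\nc{\mathcal A}$, and this normal closure is abelian.
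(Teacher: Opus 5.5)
The paper gives no argument of its own for this lemma. It imports the ordered and inverse-ordered cases from Bieri--Strebel's Lemma~3.4b (with their collection Lemma~3.2), and the normal-closure statement from their Corollary~3.3. Your self-contained route has a genuine gap at the step you only assert: that every difference $u+v-w_\ell$ lies strictly inside radius $\rho$. This is false.

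Take $k=2$, $\rho=100$, $u=(0,15)$, $v=(99,-15)$. Then $\norm u=15\le 25$, $\norm v^2=10026<100.25^2$ and $\norm{u+v}=99$, so the hypotheses hold. Here $q(u)q(v)=t_2^{15}t_1^{99}t_2^{-15}=t_1^{99}c$ with $c=[t_2^{15},t_1^{99}]^{t_2^{-15}}$. This is a product of the factors $(a_{12}^{-1})^{t_1^{s}t_2^{r-15}}$ with $0\le s\le 98$ and $0\le r\le 14$. For $s=r=0$, requiring this factor to commute with $b^{q(u+v)}=b^{t_1^{99}}$ is equivalent to $[a_{12},b^{t_1^{99}t_2^{15}}]=1$. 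But $\norm{(99,15)}^2=10026>\rho^2$. That commutation is neither a relator nor an instance of the lemma, since its conjugator maps to $(99,15)$, outside the ball. Commutations at such displacements are exactly what Proposition~\ref{prop:sound} obtains only by invoking the Laurent relations. So neither the triangle-inequality bookkeeping nor your induction on $\norm u_1$ can close the argument.

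In this example the opposite reduction works. Every conjugator $(s,r-15)$ has norm at most $\sqrt{98^2+15^2}<100$, so $c$ commutes with $a$ by relators, and $[a,b^{q(u)q(v)}]=[a,b^{q(u+v)}]^{c}=1$. That choice fails elsewhere, though. For instance (again $\rho=100$) take $u=(0,1)$, $v=(1,-100)$. Then $c=(a_{12}^{-1})^{t_2^{-100}}$ has its conjugator at norm exactly $\rho$, and only your side applies. For $\rho=10^4$, $u=(-2,200)$, $v=(10^4,-1)$, neither uniform choice works factor by factor.

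The missing idea is a rule that apportions the factors of $c$, compatibly with their order, between two kinds:
\begin{itemize}
\item those commuting with $a$, which needs $\norm{w_\ell}<\rho$;
\item those commuting with $b^{q(u+v)}$, which needs $\norm{u+v-w_\ell}<\rho$ together with a controlled recursion for the non-monomial words $q(u+v)q(w_\ell)^{-1}$.
\end{itemize}
The rule must be arranged so that the constants $\rho/(2k)$ and $\rho+1/(2k)$ suffice. This is the real content of the Bieri--Strebel collection argument, and your proposal does not supply it. Your claim that for $k\ge 3$ all conjugators can be taken to be normal-ordered monomials is also unjustified.

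For the final assertion, your normality argument for $N$ is fine, but you have not checked that $\norm w_1+\norm{w'}_1$ decreases. An induction on the length of arbitrary $t$-words closes it. Each adjacent swap alters the word by a conjugate of some $a_{ij}^{\pm1}$, and the commutation this requires is either a relator or concerns a shorter word.
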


These are the ordered and inverse-ordered cases of
\cite[Lemma~3.4b]{BS}; the normal-closure assertion is
\cite[Corollary~3.3]{BS}. The collection argument
\cite[Lemma~3.2]{BS} is valid in every quotient of $H_\rho$.

For the second input, let $A$ be an abelian normal subgroup of a group
$E$ with $E/A\cong\Z^k$. Conjugation gives $A$ a right module structure
over $\laur$, independent of the chosen lifts of the quotient generators.
Let $A^*$ denote the inverse-action module, in which $t_i$ acts as
$t_i^{-1}$ acts on $A$.

\begin{lemma}[Finite signed tameness data]
\label{lem:tameness}
Let $k\ge1$. Suppose $E$ is finitely presented and metabelian, $E/A\cong\Z^k$, and
$A$ is finitely generated as a $\laur$-module. There is a finite family
of nonzero Laurent polynomials
\[
 \lambda=\sum_{u\in S_\lambda} c_{\lambda,u}t^u
 \quad(c_{\lambda,u}\in\Z\setminus\{0\}),
\]
indexed by a signed family $\Lambda=\Lambda_+\sqcup\Lambda_-$, such that
\begin{gather*}
 A(\lambda-1)=0\quad(\lambda\in\Lambda_+),\qquad
 A^*(\lambda-1)=0\quad(\lambda\in\Lambda_-),\\[3pt]
 \forall v\in\R^k\setminus\{0\}\ \exists\lambda\in\Lambda:
       \min_{u\in S_\lambda}v\cdot u>0.
\end{gather*}
\end{lemma}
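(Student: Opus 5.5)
Since $E$ is finitely presented and metabelian, the Bieri--Strebel theorem applies to the extension $A\rightarrowtail E\epito\Z^k$. It says that $A$ is $2$-tame, in particular tame: $\Sigma_A\cup(-\Sigma_A)=S^{k-1}$. Here $\Sigma_A$ is the Bieri--Strebel invariant, the set of directions $[v]$ such that $A$ is finitely generated over the monoid ring $\Z Q_v$ with $Q_v=\{u:v\cdot u\ge0\}$. The plan is to convert this geometric statement into finitely many explicit Laurent polynomials by a compactness argument on the sphere.

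\textbf{Step 1 (local data).} Fix $v\in\R^k\setminus\{0\}$. Suppose first that $[v]\in\Sigma_A$. The standard characterization of $\Sigma_A$ for finitely generated modules over a Noetherian commutative ring (Bieri--Strebel, or the centralizer/annihilator description) then gives an element $\lambda$ of the centralizer $C=\laur$ acting on $A$ with $A(\lambda-1)=0$ and support strictly on the positive side of $v$: $\min_{u\in S_\lambda}v\cdot u>0$. This uses the $\Z$-linearity of the action: $\lambda\equiv1$ on $A$ means $\lambda-1$ lies in the annihilator ideal, and $\lambda$ itself is a Laurent polynomial supported in $\{v\cdot u>0\}$. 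If instead $[v]\notin\Sigma_A$, then $[-v]\in\Sigma_A$. Passing to $A^*$, whose action is twisted by $t\mapsto t^{-1}$, turns this into $[v]\in\Sigma_{A^*}$, so the same characterization gives $\lambda$ with $A^*(\lambda-1)=0$ and $\min_{S_\lambda}v\cdot u>0$. This $\lambda$ goes into $\Lambda_-$, and the previous case supplies $\Lambda_+$.

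\textbf{Step 2 (compactness).} The condition $\min_{u\in S_\lambda}v\cdot u>0$ is open in $v$, because $S_\lambda$ is finite. So each $\lambda$ produced in Step 1 covers an open neighbourhood of $[v]$ in $S^{k-1}$. By compactness, finitely many such $\lambda$ cover the whole sphere. The covering condition is scale-invariant, so this settles every $v\ne0$. Taking the finite family obtained, with each member labelled by the sign of the module it came from, gives $\Lambda=\Lambda_+\sqcup\Lambda_-$. All members are nonzero, because a polynomial with empty support cannot satisfy the strict inequality.

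\textbf{Main obstacle.} The main obstacle is Step 1: the exact form of the classical criterion. One needs, for $[v]\in\Sigma_A$, an element $\lambda\equiv1$ on $A$ whose \emph{entire} support lies in the open half-space, rather than merely an element of $1+$(something in $\Z Q_v$). I would invoke Bieri--Strebel's result that $[v]\in\Sigma_A$ if and only if some $\lambda\in C$ with $\lambda\equiv 1$ modulo $\mathrm{Ann}(A)$ has support in $\{v\cdot u>0\}$ \cite{BS}. If only the weak form is available, the fallback is to derive the strict version directly. Finite generation over $\Z Q_v$ lets one write $a\,t^{w}$, for some $w$ with $v\cdot w<0$, as a $\Z Q_v$-combination of the generators. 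A determinant (Cayley--Hamilton) trick on the generators then produces a monic-type relation. Multiplying through by $t^{-w}$ shifts its support to the positive side, which yields $\lambda$.
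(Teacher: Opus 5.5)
Your proposal is correct and follows the paper's route: you combine the necessity direction of Bieri--Strebel (finite presentation forces $\Sigma_A\cup(-\Sigma_A)=S^{k-1}$) with the centralizer criterion applied to $A$ and to $A^*$, you make explicit the compactness step that the paper leaves implicit, and your determinant fallback is exactly the standard proof of the direction of the criterion you need. The one point to tighten is nonzeroness: the paper does not rely on a convention about $\min$ over an empty support (under the Bieri--Strebel convention $v(0)=\infty$ the zero polynomial would pass), but notes that $A(\lambda-1)=0$ forces $\lambda\neq0$ when $A\neq0$, and simply uses the monomials $t_i^{\pm1}$ when $A=0$.
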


Apply the necessity direction of \cite[Theorem~A]{BS} and the
centralizer criterion \cite[Lemma~2.6]{BS} to $A$ and $A^*$.
For $A=0$, use the monomials $t_i^{\pm1}$; for $A\ne0$, an
identity-acting polynomial is nonzero. The minus sign changes the
module action; the displayed support remains $S_\lambda$.

\section{Rational separation and the commutation radius}
\label{sec:geometry}

Fix $k\ge1$ and a finite family $(S_\lambda)_{\lambda\in\Lambda}$
of nonempty subsets of $\Z^k$. Consider
\begin{equation}
\label{eq:cover}
 \forall v\ne0\ \exists\lambda\ \forall u\in S_\lambda:
       v\cdot u>0.
\end{equation}
Only rational arithmetic is needed to test this condition.

\begin{lemma}[Rational separation]
\label{lem:margin}
Condition~\eqref{eq:cover} is decidable. If it holds, there is an
integer $r\ge0$ such that, for $\varepsilon=2^{-r}$,
\begin{equation}
\label{eq:margin}
 \forall v\in\R^k,\ \norm v_\infty=1
 \ \Longrightarrow\
 \exists\lambda\ \forall u\in S_\lambda:\ v\cdot u>\varepsilon.
\end{equation}
Whether a proposed $r$ satisfies~\eqref{eq:margin} is decidable by a
primitive-recursive computation on the finite integer data.
\end{lemma}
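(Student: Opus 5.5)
The plan is to reduce everything to a finite statement about the faces of the cube $\{\norm v_\infty=1\}$ and use compactness plus linear programming over $\mathbb Q$. For each $\lambda$, the set
\[
 C_\lambda=\{v\in\R^k:\ v\cdot u>0\ \forall u\in S_\lambda\}
\]
is an open polyhedral cone, since $S_\lambda$ is finite. Condition~\eqref{eq:cover} says that these cones cover $\R^k\setminus\{0\}$. Because the cones are homogeneous, this is equivalent to covering the boundary of the unit cube. That boundary is the union of the $2k$ facets $F_{i,\pm}=\{v:\ v_i=\pm1,\ |v_j|\le1\}$, which are rational polytopes.

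\textbf{Decidability of~\eqref{eq:cover}.} Failure of~\eqref{eq:cover} means there is some facet $F_{i,\pm}$ and a point $v$ on it with
\[
 v\notin C_\lambda\ \text{for every }\lambda,\qquad\text{i.e.}\qquad \forall\lambda\ \exists u_\lambda\in S_\lambda:\ v\cdot u_\lambda\le 0 .
\]
I will enumerate all choice functions $\lambda\mapsto u_\lambda\in\prod_\lambda S_\lambda$; there are finitely many. For each choice function and each facet, the question becomes feasibility of a system of non-strict rational linear inequalities in $v$. Such feasibility is decidable by Fourier--Motzkin elimination, which uses only rational arithmetic and runs with a primitive-recursive bound in the size of the data. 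Condition~\eqref{eq:cover} holds if and only if every one of these systems is infeasible.

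\textbf{Existence of $r$.} Let $m(v)=\max_\lambda\min_{u\in S_\lambda} v\cdot u$. This function is continuous, as a max of mins of linear maps. Under~\eqref{eq:cover} it is positive on the compact boundary of the cube, so it has a positive minimum $\mu$. Any $r$ with $2^{-r}<\mu$ then works. To get an explicit bound, I will observe that the minimum of the piecewise-linear function $m$ on each facet is attained at a vertex of the common refinement of the facet by the hyperplanes $v\cdot(u-u')=0$ and $v\cdot u=0$. Each such vertex is the solution of $k$ linearly independent integer equations, so it is rational with height bounded (via Cramer) by a Hadamard-type bound in the entries. This yields a computable lower bound on $\mu$, and hence an explicit $r$.

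\textbf{Checking a proposed $r$.} I will use the same choice-function trick with $\varepsilon=2^{-r}$. The margin condition~\eqref{eq:margin} fails if and only if, for some facet and some choice function, the system
\[
 v\in F_{i,\pm},\qquad v\cdot u_\lambda\le 2^{-r}\ \ \forall\lambda
\]
is feasible. After scaling by $2^r$ this is a rational LP feasibility test, again done by Fourier--Motzkin. The main obstacle is making the primitive-recursive claim honest. Here the number of choice functions is bounded by $\prod_\lambda|S_\lambda|$, and Fourier--Motzkin produces at most doubly exponentially many inequalities with coefficients of bounded growth. Both are primitive-recursive in the encoding, so a bounded-quantifier formulation suffices. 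Deriving the explicit $\mu$-bound cleanly is the step I expect to need the most care. If that proves awkward, I would instead search $r=0,1,2,\dots$ using the decidable test, since termination is guaranteed by the compactness argument.
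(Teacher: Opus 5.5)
Your proposal is correct and follows essentially the same route as the paper. You reduce to the $2k$ facets of the cube, enumerate choice functions in $\prod_\lambda S_\lambda$, decide each system $v\cdot u_\lambda\le\varepsilon$ by Fourier--Motzkin (with $\varepsilon=0$ for \eqref{eq:cover}), and obtain $r$ by compactness from the positive minimum $\mu$ of $\max_\lambda\min_{u\in S_\lambda}v\cdot u$; your Cramer-type explicit lower bound on $\mu$ is a valid optional refinement that the paper does not need, since the lemma only asserts that some $r$ exists.
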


\begin{proof}
Put $\Sigma=\{v\in\R^k:\norm v_\infty=1\}$. Its faces are
\[
 F_{j,s}=\{v\in[-1,1]^k:v_j=s\},\qquad
 \Sigma=\bigcup_{j=1}^{k}\ \bigcup_{s\in\{-1,1\}}F_{j,s}.
\]
Failure of \eqref{eq:margin} on $F_{j,s}$ is equivalent to the
existence of $(u_\lambda)_\lambda\in\prod_\lambda S_\lambda$ and $v$ satisfying
\begin{equation}
\label{eq:linear}
 v\cdot u_\lambda\le\varepsilon\quad(\lambda\in\Lambda),
 \qquad -1\le v_i\le1,\qquad v_j=s.
\end{equation}
Fourier--Motzkin elimination is the following rational equivalence.
For $\alpha_i x+\beta_i\cdot y\le\gamma_i$, put
$I_\pm=\{i:\pm\alpha_i>0\}$ and $I_0=\{i:\alpha_i=0\}$. Then
\[
 \exists x\ \forall i:\ \alpha_i x+\beta_i\cdot y\le\gamma_i
 \quad\Longleftrightarrow\quad
 \begin{cases}
  \beta_i\cdot y\le\gamma_i &(i\in I_0),\\[4pt]
  \dfrac{\gamma_n-\beta_n\cdot y}{\alpha_n}
  \le\dfrac{\gamma_p-\beta_p\cdot y}{\alpha_p}
       &(n\in I_-,\ p\in I_+).
 \end{cases}
\]
After $k$ eliminations, only rational inequalities remain. At each
stage, rational values of the remaining coordinates admit a rational
choice of the eliminated coordinate. Thus rational and real
feasibility agree. Applying the same test with $\varepsilon=0$
decides \eqref{eq:cover}.

Under \eqref{eq:cover}, $\Lambda\ne\varnothing$ and compactness gives
\[
 0<\mu:=\min_{v\in\Sigma}\max_\lambda
                \min_{u\in S_\lambda}v\cdot u,
 \qquad 2^{-r}<\mu\ \Longrightarrow\ \eqref{eq:margin}.
\]
Hence some $r\in\N$ succeeds. For a supplied $r$, the $k$ elimination
steps, finite choices of supports and exact rational comparisons
are primitive recursive.
\end{proof}

For data passing this margin test put
\begin{equation}
\label{eq:constants}
 C=\frac{2^{-r}}{k},\qquad
 D=1+\sum_\lambda\sum_{u\in S_\lambda}\norm u_1,\qquad
 R=1+2k\bigl(1+D+D^2k\,2^r\bigr).
\end{equation}
Thus $D$ is an integer, $0<C\le1$, and
$R=1+2k(1+D+D^2/C)$. In particular,
\begin{equation}
\label{eq:large-radius}
 R>2k,\qquad D<\frac{R}{2k},\qquad R>\max\{C,D^2/C\}.
\end{equation}
By homogeneity and $\norm v_\infty\ge1/k$ for $\norm v=1$,
\eqref{eq:margin} implies that for each Euclidean unit vector $v$
some support satisfies $v\cdot u>C$ throughout.

\begin{lemma}[Radial propagation]
\label{lem:shrink}
Put $\delta=\min\{C/4,1/(4k)\}$. If $\rho\ge R$ and
$\rho\le\norm v<\rho+\delta$, there is a support $S_\lambda$ such that
\[
 \norm{v+u}<\rho \qquad(u\in S_\lambda).
\]
\end{lemma}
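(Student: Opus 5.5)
Write $v=\norm v\,e$ with $e$ a Euclidean unit vector. By the remark after \eqref{eq:large-radius}, there is a support $S_\lambda$ with $e\cdot u>C$ for every $u\in S_\lambda$. I will choose the support in the direction $-e$ instead. Applying the remark to the unit vector $-e$ gives $\lambda$ with $(-e)\cdot u>C$, that is, $e\cdot u<-C$ for all $u\in S_\lambda$. So translating $v$ by any $u\in S_\lambda$ moves it inward by at least $C$ in the radial direction. The rest of the proof checks that the tangential part of $u$ cannot undo this gain once $\rho\ge R$.

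The computation is a direct expansion. For $u\in S_\lambda$ we have
\[
 \norm{v+u}^2=\norm v^2+2\norm v\,(e\cdot u)+\norm u^2<\norm v^2-2C\norm v+\norm u^2 .
\]
Each $u\in S_\lambda$ satisfies $\norm u\le\norm u_1\le D-1<D$, since $D$ exceeds the sum of all $\ell^1$ norms. With $\norm v<\rho+\delta$ and $\norm v\ge\rho$, it suffices to show
\[
 (\rho+\delta)^2-2C\rho+D^2\le\rho^2,
 \quad\text{i.e.}\quad
 2\delta\rho+\delta^2+D^2\le 2C\rho .
\]
Care is needed here because the left side of the first display is increasing in $\norm v$ only for the $\norm v^2$ term. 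The term $-2C\norm v$ is decreasing, so I bound it by $-2C\rho$ using $\norm v\ge\rho$, and bound $\norm v^2$ by $(\rho+\delta)^2$ separately.

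Now $\delta\le C/4$, so $2\delta\rho\le C\rho/2$. Also $\delta^2\le C^2/16\le C\rho/16$ because $C\le1\le\rho$. It remains to have $D^2\le C\rho\cdot(3/2-1/16)$, and $D^2\le C\rho$ suffices. This follows from $\rho\ge R>D^2/C$ in \eqref{eq:large-radius}. The inequality is then strict, since $e\cdot u<-C$ is strict.

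The main obstacle is this bookkeeping: the margin $C$ must come from the antipodal direction $-e$, and the radius condition $R>D^2/C$ must absorb the quadratic term $\norm u^2$. Both conditions are supplied by \eqref{eq:constants}. The choice $\delta\le1/(4k)$ is not needed here; presumably it serves the later compatibility with Lemma~\ref{lem:collection}. If a sharper form were required, I would use $\sqrt{\norm v^2-2C\norm v+D^2}\le\norm v-C+D^2/(2\norm v)$ instead.
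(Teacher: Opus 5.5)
Your proof is correct and is essentially the paper's argument: take the support given by the margin in the antipodal direction $-v/\norm v$, expand $\norm{v+u}^2$, and absorb $\norm u^2<D^2$ using $\rho\ge R>D^2/C$ and $\delta\le C/4$. The only difference is cosmetic: the paper replaces $\norm v^2-2C\norm v$ by its value at $\rho+\delta$, using that this expression is increasing for $\norm v>C$, while you bound the two terms separately, which costs only a harmless extra $2C\delta$.
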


\begin{proof}
Put $s=\norm v$. The margin in direction $-v/\norm v$ gives
$\lambda$ with $v\cdot u\le-Cs$ for all $u\in S_\lambda$.
For each such $u$, $\norm u<D$, and
\begin{align*}
 \norm{v+u}^2
 &=s^2+2v\cdot u+\norm u^2\\
 &<s^2-2Cs+D^2\\
 &<(\rho+\delta)^2-2C(\rho+\delta)+D^2\\
 &=\rho^2-2(C-\delta)\rho+\delta^2-2C\delta+D^2\\
 &\le\rho^2-\tfrac32 C\rho+D^2
 <\rho^2.
\end{align*}
Indeed $s\ge\rho\ge R>C$, $\delta\le C/4$ and $\rho>D^2/C$.
The same $\lambda$ works for every $u\in S_\lambda$.
\end{proof}

\section{Metabelian presentations from Laurent relations}
\label{sec:covers}

A \emph{cover datum} consists of $k\ge1$, a finite nonempty alphabet
$\mathcal A$ with designated letters $a_{ij}$, a finite family of signed
nonzero Laurent polynomials, and an integer $r$ passing the margin test.
Write $\Lambda_+$ and $\Lambda_-$ for the two signed families, and
fix an order on every support. Using the radius~\eqref{eq:constants},
define $P_R$ on the alphabet $\{t_1,\ldots,t_k\}\cup\mathcal A$ by
\begin{alignat}{2}
 &[t_i,t_j]=a_{ij}
   &\qquad &(1\le i<j\le k), \label{eq:cover-t}\\[3pt]
 &[a,b^{q(v)}]=1
   &\qquad &(a,b\in\mathcal A,\ \norm v<R), \label{eq:cover-short}\\[3pt]
 &a=\prod_{u\in S_\lambda}(a^{c_{\lambda,u}})^{q(u)}
   &\qquad &(a\in\mathcal A,\ \lambda\in\Lambda_+), \label{eq:cover-plus}\\[3pt]
 &a=\prod_{u\in S_\lambda}(a^{c_{\lambda,u}})^{q(u)^{-1}}
   &\qquad &(a\in\mathcal A,\ \lambda\in\Lambda_-). \label{eq:cover-minus}
\end{alignat}
Products follow the chosen support order; the minus case uses the
literal inverse $q(u)^{-1}$. The relator set is finite since
\[
 B_R=\{v\in\Z^k:\norm v<R\}
 =\left\{v\in[-R,R]^k\cap\Z^k:\sum_i v_i^2<R^2\right\}.
\]

\begin{proposition}[Metabelianity]
\label{prop:sound}
Every group $P_R$ constructed from an accepted cover datum is metabelian.
\end{proposition}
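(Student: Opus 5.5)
The plan is to prove that all ordered commutation relators $[a,b^{q(v)}]=1$, for every $v\in\Z^k$, already hold in $P_R$. The normal-closure assertion of Lemma~\ref{lem:collection} then shows that the normal closure $N$ of $\mathcal A$ in $P_R$ is abelian. By \eqref{eq:cover-t}, $P_R/N$ is generated by commuting images of $t_1,\ldots,t_k$, so it is abelian. Hence $P_R'\le N$, and $P_R$ is metabelian. Since Lemma~\ref{lem:collection} applies in every quotient of $H_R$, and $P_R$ is such a quotient by \eqref{eq:cover-t} and \eqref{eq:cover-short}, its collection identities may be used freely in $P_R$.

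I will argue by induction on radius. Let $\mathcal C_\rho$ be the statement that $[a,b^{q(v)}]=1$ in $P_R$ for all $a,b\in\mathcal A$ and all $\norm v<\rho$. The base case $\mathcal C_R$ is exactly \eqref{eq:cover-short}. For the inductive step, assume $\mathcal C_\rho$ with $\rho\ge R$, and deduce $\mathcal C_{\rho+\delta}$ with $\delta$ as in Lemma~\ref{lem:shrink}. Since $\delta$ is a fixed positive constant, finitely many steps reach any $v$.

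The inductive step runs as follows. Take $\rho\le\norm v<\rho+\delta$. Lemma~\ref{lem:shrink} gives $\lambda$ with $\norm{v+u}<\rho$ for all $u\in S_\lambda$. Suppose first $\lambda\in\Lambda_+$. Replace $b$ by the right-hand side of \eqref{eq:cover-plus}, so that
\[
 b^{q(v)}=\prod_{u\in S_\lambda}\bigl((b^{c_{\lambda,u}})^{q(u)}\bigr)^{q(v)}.
\]
It then suffices that $a$ commutes with each $b^{q(u)q(v)}$. To get this, apply Lemma~\ref{lem:collection} in the quotient $H_\rho$-type setting: $P_R$ with $\mathcal C_\rho$ is a quotient of $H_\rho$, and $\rho>2k$. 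The hypotheses of \eqref{eq:collection} hold because $\norm u<D<R/(2k)\le\rho/(2k)$ by \eqref{eq:large-radius}, $\norm v<\rho+\delta\le\rho+1/(4k)$, and $\norm{u+v}<\rho$. So $[a,b^{q(u)q(v)}]=1$, and therefore $[a,b^{q(v)}]=1$.

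For $\lambda\in\Lambda_-$, the ordering needs care, and I expect this to be the main obstacle. Substituting \eqref{eq:cover-minus} gives conjugators of the form $q(u)^{-1}q(v)$, not $q(u)^{-1}q(v)^{-1}$. I would handle this by symmetry: apply the argument to $-v$ instead of $v$, using the inverse-ordered clause $[a,b^{q(u)^{-1}q(w)^{-1}}]=1$ with $q(w)^{-1}$ where $w$ is the appropriate vector. This requires checking that $q(v)$ and $q(-v)^{-1}$ agree modulo elements of $N$ that commute with the relevant conjugates, i.e.\ that they differ by products of $a_{ij}$'s. Alternatively, I would set up the induction symmetrically over both $q(v)$-conjugates and $q(-v)^{-1}$-conjugates, so that each sign of $\lambda$ is matched with the collection clause of the right shape. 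Getting this bookkeeping to match Lemma~\ref{lem:collection} exactly, including the direction chosen in Lemma~\ref{lem:shrink}, is where I expect the delicate work to lie.
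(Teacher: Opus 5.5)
There is a genuine gap in the $\Lambda_-$ step, which you leave open. The rest agrees with the paper: the radius induction, the base case, the $\Lambda_+$ step with its check of \eqref{eq:collection}, and the final passage from all ordered commutators to $P_R''=1$.

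The obstruction in the $\Lambda_-$ step is not the literal order of the $t$-letters. For the $\lambda$ supplied by Lemma~\ref{lem:shrink}, $\norm{v+u}<\rho\le\norm v$ forces $v\cdot u<0$ on $S_\lambda$. So substituting \eqref{eq:cover-minus} into $b^{q(v)}$ produces conjugators whose image in $\Z^k$ is $v-u$, and $\norm{v-u}>\norm v\ge\rho$. The substitution moves outward, past the induction hypothesis.

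An example makes this concrete. Take $k=1$, $\Lambda_+=\{2t\}$, $\Lambda_-=\{2t^{-1}\}$; such data arise from $\Z[1/2]\rtimes\Z$ and pass the margin test. Here $q(v)=q(-v)^{-1}$ literally. Both \eqref{eq:cover-plus} and \eqref{eq:cover-minus} read $a=(a^2)^t$. For $v>0$, substituting into $b^{t^v}$ only gives $(b^{t^{v+1}})^2$, whereas $a^{t^{-v}}=(a^{t^{1-v}})^2$ moves inward. So comparing $q(v)$ with $q(-v)^{-1}$ modulo $N$ cannot be the fix. What is needed is to change which letter carries the conjugation.

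The missing step is to conjugate the commutator back and substitute into $a$ rather than $b$:
\[
 [a,b^{q(v)}]^{q(v)^{-1}}=[a^{q(v)^{-1}},b],
 \qquad
 a^{q(v)^{-1}}=\prod_{u\in S_\lambda}\bigl(a^{c_{\lambda,u}}\bigr)^{q(u)^{-1}q(v)^{-1}},
\]
where the second identity is \eqref{eq:cover-minus} conjugated by $q(v)^{-1}$. Each factor commutes with $b$ by the inverse-ordered clause of Lemma~\ref{lem:collection}, applied with $a$ and $b$ interchanged. Its hypotheses are exactly the ones you already verified for this same $\lambda$: $\norm u<D<\rho/(2k)$, $\norm v<\rho+\delta$, and $\norm{u+v}<\rho$. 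Hence $[a^{q(v)^{-1}},b]=1$, and conjugating by $q(v)$ gives $[a,b^{q(v)}]=1$. This is the paper's argument.

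Your ``symmetric induction'' is therefore automatic. The family $\mathcal C_\rho$ is already invariant under $(a,b,q(v))\mapsto(b,a,q(v)^{-1})$, and recognizing that invariance is precisely the step your proposal lacks.
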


\begin{proof}
In a group satisfying \eqref{eq:cover-t}, \eqref{eq:cover-plus} and
\eqref{eq:cover-minus}, write
\[
 \mathcal C(\rho):\quad
 [a,b^{q(v)}]=1\quad
 (a,b\in\mathcal A,\ v\in\Z^k,\ \norm v<\rho).
\]
We prove $\mathcal C(\rho)\Rightarrow\mathcal C(\rho+\delta)$ for $\rho\ge R$.

Only vectors $v$ with $\rho\le\norm v<\rho+\delta$ need consideration.
Choose $\lambda$ by Lemma~\ref{lem:shrink}. Then, for every
$u\in S_\lambda$,
\begin{equation}
\label{eq:collection-app}
 \norm u<D<\rho/(2k),\qquad
 \norm v<\rho+1/(2k),\qquad
 \norm{u+v}<\rho.
\end{equation}

If $\lambda$ has sign $+$, Lemma~\ref{lem:collection} and
\eqref{eq:collection-app} give
\[
 [a,b^{q(u)q(v)}]=1\qquad(u\in S_\lambda).
\]
Writing $C_H(a)$ for the centralizer of $a$ in the ambient group,
relation~\eqref{eq:cover-plus} therefore yields
\[
 b^{q(v)}
 =\prod_{u\in S_\lambda}(b^{c_{\lambda,u}})^{q(u)q(v)}
 \in C_H(a).
\]
If $\lambda$ has sign $-$, the inverse-word assertion of
Lemma~\ref{lem:collection}, with the alphabet letters interchanged, gives
\[
 [b,a^{q(u)^{-1}q(v)^{-1}}]=1\qquad(u\in S_\lambda).
\]
Now \eqref{eq:cover-minus} yields
\[
 a^{q(v)^{-1}}
 =\prod_{u\in S_\lambda}(a^{c_{\lambda,u}})^{q(u)^{-1}q(v)^{-1}}
 \in C_H(b).
\]
Conjugating this commutation identity by $q(v)$ gives
$[a,b^{q(v)}]=1$. Thus both signs prove the propagation step.

In $P_R$, relation~\eqref{eq:cover-short} supplies $\mathcal C(R)$, so
\[
 \mathcal C(R+n\delta)\quad(n\in\N),\qquad
 \bigcup_{n\ge0}\{v\in\Z^k:\norm v<R+n\delta\}=\Z^k.
\]
All ordered commutators vanish. Lemma~\ref{lem:collection} and
\eqref{eq:cover-t} imply, for $A_R=\nc{\mathcal A}$,
\[
 A_R'=1,\qquad (P_R/A_R)'=1,\qquad
 P_R'\le A_R,\qquad P_R''=1.\qedhere
\]
\end{proof}

\section{Cofinality of the covers}
\label{sec:cofinality}

\begin{lemma}[Central extensions]
\label{lem:central}
If $1\to K\to E\to G\to1$ is a central extension, $G$ is finitely
presented, and $K$ is free abelian of finite rank, then $E$ is finitely
presented.
\end{lemma}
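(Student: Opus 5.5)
We give an explicit finite presentation of $E$ from a finite presentation of $G$ and a basis of $K$. Fix $G=\gp{y_1,\ldots,y_m\mid s_1,\ldots,s_\ell}$ and a basis $z_1,\ldots,z_p$ of $K\cong\Z^p$. Choose lifts $\tilde y_i\in E$ of the $y_i$. Each relator $s_j$, evaluated on the lifts, maps to $1$ in $G$. Hence $s_j(\tilde y)\in K$, and we may write
\[
 s_j(\tilde y)=z_1^{e_{j1}}\cdots z_p^{e_{jp}}\qquad(e_{ji}\in\Z).
\]

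The candidate presentation $\widehat P$ has generators $y_1,\ldots,y_m,z_1,\ldots,z_p$ and the following finitely many relators:
\[
 [z_a,z_b]\ (a<b),\qquad [z_a,y_i],\qquad s_j\,\bigl(z_1^{e_{j1}}\cdots z_p^{e_{jp}}\bigr)^{-1}.
\]
Let $\widehat E$ be the group it defines. There is a homomorphism $\phi\colon\widehat E\to E$ sending $y_i\mapsto\tilde y_i$ and $z_a\mapsto z_a$. It is well defined because $K$ is abelian and central, and the remaining relators hold by the choice of the $e_{ji}$.

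To see that $\phi$ is surjective, take $g\in E$. Its image in $G$ is a word in the $y_i$, so $g$ equals that word in the $\tilde y_i$ times an element of $K$. The elements of $K$ are words in the $z_a$.

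Injectivity is the step needing care. Let $Z=\gp{z_1,\ldots,z_p}\le\widehat E$. The relators make $Z$ central, hence normal. Killing $Z$ turns the third family into the relators $s_j$, so $\widehat E/Z\cong G$. The map $\phi$ therefore induces a map of extensions
\[
 \begin{array}{ccccccccc}
 1&\to&Z&\to&\widehat E&\to&G&\to&1\\
 &&\downarrow&&\downarrow\phi&&\|&&\\
 1&\to&K&\to&E&\to&G&\to&1.
 \end{array}
\]
On $Z$, the map $\phi$ sends the generators $z_a$ to a basis of the free abelian group $K$. Since $Z$ is abelian and generated by the $z_a$, it is a quotient of $\Z^p$. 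The composite $\Z^p\to Z\to K$ is an isomorphism, so $Z\to K$ is injective as well as surjective.

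The five lemma, or a direct diagram chase, now gives injectivity of $\phi$. Suppose $\phi(w)=1$. Then the image of $w$ in $G$ is trivial, so $w\in Z$. Injectivity on $Z$ then gives $w=1$.

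Hence $E\cong\widehat E$ is finitely presented. The presentation is moreover effective: the exponents $e_{ji}$ are finite data recorded in $\widehat P$.
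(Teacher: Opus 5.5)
Your proof is correct and follows essentially the same route as the paper. You use the same presentation (central $z$-generators plus the twisted relators $s_j\bigl(z_1^{e_{j1}}\cdots z_p^{e_{jp}}\bigr)^{-1}$) and the same surjection onto $E$, and you prove injectivity the same way: a kernel element reduces to a $z$-word, which must be trivial because the $z_a$ map to a basis of $K$. Your formulation via $\widehat E/Z\cong G$ and injectivity of $Z\to K$ is a slightly cleaner packaging of the paper's step of rewriting the $Y$-word as a product of conjugates of relators.
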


\begin{proof}
Let $G=\langle x_1,\ldots,x_m\mid r_1,\ldots,r_\ell\rangle$.
Choose lifts $y_i\in E$ and a basis $z_1,\ldots,z_s$ of $K$; write
\[
 r_j(y)=\prod_{h=1}^{s}z_h^{b_{jh}},\qquad b_{jh}\in\Z.
\]
For alphabets $Y=\{y_1,\ldots,y_m\}$ and $Z=\{z_1,\ldots,z_s\}$, set
\begin{align*}
 \mathcal R_0
 &=\{[z_h,z_{h'}]:1\le h<h'\le s\}
   \cup\{[z_h,y_i]:1\le h\le s,\ 1\le i\le m\},\\[3pt]
 \mathcal R_1
 &=\left\{r_j(y)\left(\prod_{h=1}^{s}z_h^{b_{jh}}\right)^{-1}
                    :1\le j\le\ell\right\}.
\end{align*}
The lifts induce an epimorphism
\[
 \varphi:\widetilde E:=
 \langle Y\sqcup Z\mid\mathcal R_0\cup\mathcal R_1\rangle
 \epito E.
\]
If $w\in\ker\varphi$, its image in $G$ is trivial. Express its
$Y$-word as a product of conjugates of $r_j^{\pm1}$. The relations
$\mathcal R_1$ and centrality relations $\mathcal R_0$ give
\[
 w=\prod_{h=1}^{s}z_h^{n_h}\quad\text{in }\widetilde E,
 \qquad
 1=\varphi(w)=\prod_{h=1}^{s}z_h^{n_h}\quad\text{in }K.
\]
Independence of the basis implies $n_1=\cdots=n_s=0$. Hence
$\ker\varphi=1$ and $\widetilde E\cong E$.
\end{proof}

\begin{proposition}[Quotient cofinality]
\label{prop:cofinal}
Every finitely presented metabelian group $G$ is an epimorphic image
of some certified cover $P_R$.
\end{proposition}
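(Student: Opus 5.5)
Given a finitely presented metabelian group $G$, I will first replace it by a finitely presented metabelian group $E$ of the shape required by Lemma~\ref{lem:tameness}, with $G$ a quotient of $E$. Then I will map $P_R$ onto $E$.

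\emph{Building $E$.} Choose generators $g_1,\ldots,g_k$ of $G$ with $k\ge1$; if needed, add a trivial generator. Let $\Z^k\epito G/G'$ be the induced map and let $E_0=G\times_{G/G'}\Z^k$ be the fibre product. This is a subdirect product, and the projection $E_0\to G$ is onto. The projection $E_0\to\Z^k$ is split, and its kernel is $G'\times 1$. However, $E_0$ need not be finitely presented. I would rather use a central extension, which is the purpose of Lemma~\ref{lem:central}. The kernel of $\Z^k\epito G/G'$ is free abelian of finite rank. Pull back along $G\to G/G'$ and realise $E$ as a central extension of $G$ by that kernel $K$. One way is as the subgroup of $G\times\Z^k$ given by the fibre product, noting that $E_0\to G$ has kernel $1\times K$, which is central. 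Lemma~\ref{lem:central} then makes $E_0=:E$ finitely presented.

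Next I check that $E$ is metabelian. Its derived subgroup is $(G'\times 1)'$ or smaller, so $E''=1$. Put $A=E'$. I will need $E/A\cong\Z^k$ with $A$ abelian, so I will instead take $A=\ker(E\to\Z^k)\cong G'$. This is abelian, and $E/A\cong\Z^k$. Since $E$ is finitely generated and $E/A$ is abelian and finitely presented, $A$ is finitely generated as a normal subgroup, hence as a $\laur$-module. Lemma~\ref{lem:tameness} then provides the signed family $\Lambda$ satisfying~\eqref{eq:cover}, and Lemma~\ref{lem:margin} provides an $r$ passing the margin test.

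\emph{The cover datum.} Let $\tau_i\in E$ be lifts of the basis of $\Z^k$; here $\tau_i=(g_i,e_i)$. Take $a_{ij}$ to be letters mapping to $[\tau_i,\tau_j]\in A$. Add further letters mapping to finitely many module generators of $A$. It is simplest to include all of them so that $\tau$ together with $\mathcal A$ generates $E$; the $\tau_i$ alone already generate $E$ modulo $A$. Define $\varphi$ by $t_i\mapsto\tau_i$ and $a\mapsto$ its chosen element of $A$. I then verify the relators of $P_R$:
\begin{itemize}
\item \eqref{eq:cover-t} holds by the choice of the $a_{ij}$.
\item \eqref{eq:cover-short} holds because $A$ is abelian and normal.
\item \eqref{eq:cover-plus} holds because $A(\lambda-1)=0$: the word $\prod(a^{c})^{q(u)}$ is exactly $a\cdot\lambda$ in the right module notation, and the product order is irrelevant in the abelian group $A$.
\item \eqref{eq:cover-minus} follows in the same way from $A^*(\lambda-1)=0$, since conjugation by $q(u)^{-1}$ realises the inverse action.
\end{itemize}
Therefore $\varphi$ is a surjection $P_R\epito E\epito G$.

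\emph{Main obstacle.} The delicate point is the construction of $E$: simultaneously keeping finite presentability, getting $E/A$ free abelian of rank exactly $k$, and having $A$ abelian. If the fibre-product argument via Lemma~\ref{lem:central} does not go through, I would try the alternative of quotienting a free metabelian-by-$\Z^k$ construction. However, the central-extension route is clearly what the paper intends.
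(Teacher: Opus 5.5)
Your proposal is correct and follows the paper's proof. You use the same fibre product $E=G\times_{G/G'}\Z^k$, made finitely presented by Lemma~\ref{lem:central} through the central kernel $1\times K$, then Lemmas~\ref{lem:tameness} and~\ref{lem:margin}, followed by the same relator check and surjectivity argument. A few side remarks need fixing, though none of them is used in the argument: $E_0\to\Z^k$ need not split (already for the Heisenberg group, lifts of a basis of $G/G'$ never commute); $E_0$ is in fact finitely presented; $E'\le G'\times1$ (not $(G'\times1)'$) is what gives $E''=1$; and, as in the paper, you should add an identity letter so that $\mathcal A$ is nonempty when $k=1$ and $A=0$.
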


\begin{proof}
Put $A=G'$ and $Q=G/A$. Choose a surjection
$\pi:\Z^k\epito Q$ with $k\ge1$ and form the pullback
\[
 E=\{(g,z)\in G\times\Z^k:gA=\pi(z)\}.
\]
Both projections are surjective. Their kernels give exact sequences
\begin{equation}
\label{eq:pullback-sequences}
 1\longrightarrow A\longrightarrow E\longrightarrow\Z^k\longrightarrow1,
 \qquad
 1\longrightarrow\ker\pi\longrightarrow E\longrightarrow G\longrightarrow1.
\end{equation}
The second kernel consists of $(1,z)$ with $z\in\ker\pi$, and is
central and free abelian of finite rank. Lemma~\ref{lem:central}
makes $E$ finitely presented. As a subgroup of the metabelian group
$G\times\Z^k$, it is metabelian.

For a finite generating set $g_1,\ldots,g_m$ of $G$,
\[
 G'=\nc{[g_i,g_j]:i<j},\qquad
 A=\sum_{i<j}[g_i,g_j]\,\Z Q.
\]
The second equality uses additive module notation for the abelian
group $A$. Pullback along $\pi$ preserves these module generators.

Choose lifts $t_i\in E$ of the standard basis of $\Z^k$. Take a finite
module-generating alphabet $\mathcal A$ for $A$, including letters
representing every $[t_i,t_j]$. A letter representing the identity
may be added to ensure that the alphabet is nonempty.
Lemma~\ref{lem:tameness} supplies signed Laurent identities on this
module. Lemma~\ref{lem:margin} supplies a finite margin certificate.

The selected elements satisfy \eqref{eq:cover-t} by construction,
\eqref{eq:cover-short} because $A'=1$, and
\eqref{eq:cover-plus}--\eqref{eq:cover-minus} by the module identities.
They induce $\psi:P_R\to E$. If $B$ is the image of $\mathcal A$, then
\[
 A=\langle b^{q(v)}:b\in B,\ v\in\Z^k\rangle
 \le\im\psi,
 \qquad
 E=A\langle t_1,\ldots,t_k\rangle=\im\psi.
\]
Thus $\psi$ is surjective, and $P_R\epito E\epito G$.
\end{proof}

\begin{figure}[tb]
\centering
\begin{tikzpicture}[font=\small,line width=.45pt,
  >={Latex[length=1.6mm,width=1.1mm]},
  every node/.style={inner sep=3pt}]
 \node (p) at (-2.6,0) {$P_R$};
 \node (e) at (0,0) {$E$};
 \node (g) at (3,0) {$G$};
 \node (z) at (0,-1.6) {$\Z^k$};
 \node (q) at (3,-1.6) {$Q=G/G'$};
 \draw[->,shorten <=3pt,shorten >=3pt] (p)--(e);
 \draw[->,shorten <=3pt,shorten >=3pt]
   (e)--node[above=3pt] {$\mathrm{pr}_1$}(g);
 \draw[->,shorten <=3pt,shorten >=3pt]
   (e)--node[left=3pt] {$\mathrm{pr}_2$}(z);
 \draw[->,shorten <=3pt,shorten >=3pt] (g)--(q);
 \draw[->,shorten <=3pt,shorten >=3pt]
   (z)--node[below=3pt] {$\pi$}(q);
\end{tikzpicture}
\caption{The pullback $E=G\times_Q\Z^k$ and the epimorphism from $P_R$.
All arrows are surjective.}
\label{fig:pullback}
\end{figure}

\section{Epimorphisms and normal closures}
\label{sec:certificates}

For $P=\gp{X\mid R}$ and $w\in F(X)$,
\begin{equation}
\label{eq:derivation}
 w\in\nc{R}
 \quad\Longleftrightarrow\quad
 w=\prod_{\nu=1}^{m}h_\nu^{-1}r_{j_\nu}^{\epsilon_\nu}h_\nu
 \text{ in }F(X),
 \qquad \epsilon_\nu\in\{-1,1\}.
\end{equation}
Here the right side means existence of $m\ge0$, $h_\nu\in F(X)$,
$r_{j_\nu}\in R$ and the indicated signs. A specified identity is
checked by free reduction.

\begin{lemma}
\label{lem:epi}
Let $H=\gp{y_1,\ldots,y_\ell\mid S}$ and
$G=\gp{x_1,\ldots,x_n\mid R}$. Then
\[
 \exists\,f:H\epito G
 \quad\Longleftrightarrow\quad
 \begin{gathered}
 \exists u_1,\ldots,u_\ell\in F(X),\
         v_1,\ldots,v_n\in F(Y):\\
 s(u_1,\ldots,u_\ell)\in\nc{R}\quad(s\in S),\\
 v_j(u_1,\ldots,u_\ell)x_j^{-1}\in\nc{R}\quad(1\le j\le n).
 \end{gathered}
\]
Together with products \eqref{eq:derivation} for these finitely many
normal-closure memberships, the words $u_i,v_j$ have a
primitive-recursive checking predicate.
\end{lemma}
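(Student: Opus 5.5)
We prove the equivalence by translating between homomorphisms of presented groups and substitution data in free groups, and then discuss the checking predicate. By von Dyck's theorem, a homomorphism $f:H\to G$ is the same as an assignment $y_i\mapsto g_i\in G$ such that every relator $s\in S$ evaluates to $1$ in $G$. Choosing words $u_i\in F(X)$ representing the $g_i$, the condition becomes $s(u_1,\ldots,u_\ell)\in\nc{R}$ for all $s\in S$. Here $s(u_1,\ldots,u_\ell)$ is the image of $s$ under the free-group homomorphism $F(Y)\to F(X)$ with $y_i\mapsto u_i$.

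For the forward direction, let $f:H\epito G$ be given. Pick words $u_i$ representing $f(y_i)$; the relator conditions then hold. Because $f$ is surjective, each generator $x_jN$ (with $N=\nc{R}$) equals $f(w)$ for some $w\in H$. Writing $w$ as a word $v_j\in F(Y)$, we get
\[
 f(v_j)=v_j(u_1,\ldots,u_\ell)N=x_jN,
\]
so $v_j(u)x_j^{-1}\in\nc{R}$.

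For the converse, the relator memberships give a well-defined homomorphism $f:H\to G$ with $y_i\mapsto u_iN$. The second family of memberships gives $x_jN=f(v_j)\in\im f$ for every $j$. Since the $x_jN$ generate $G$, $f$ is surjective.

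For the checking predicate, a witness consists of three parts:
\begin{itemize}
 \item the words $u_i$ and $v_j$;
 \item for each of the $|S|+n$ required memberships, a derivation as in \eqref{eq:derivation}, namely a finite list of triples $(h_\nu,j_\nu,\epsilon_\nu)$;
 \item an encoding of the whole witness as a single natural number $c$.
\end{itemize}
The checker performs the following steps.
\begin{enumerate}
 \item Decode $c$ and verify that every letter lies in the correct alphabet.
 \item Compute each substituted word $s(u)$ or $v_j(u)x_j^{-1}$. This is literal substitution of $u_i$ or $u_i^{-1}$ for each letter $y_i^{\pm1}$.
 \item Form the product $\prod_\nu h_\nu^{-1}r_{j_\nu}^{\epsilon_\nu}h_\nu$.
 \item Freely reduce both words and compare them letter by letter.
\end{enumerate}
Every step is a bounded loop whose length is bounded by a primitive-recursive function of the input encoding. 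Substitution, inversion and free reduction are each computed by a single pass, for example a stack-based reduction in at most linear time. Hence the conjunction of all checks is primitive recursive. Correctness of the checker follows from \eqref{eq:derivation}: a witness exists if and only if all memberships hold.

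The only real care point is this last step. We must fix a concrete encoding of finite sequences of words, for instance Cantor pairing applied to lists of signed letter indices. Under that encoding, every quantifier in the checker must be bounded by the size of $c$, so that no unbounded search is hidden inside the predicate. The existential quantifier over witnesses then sits outside, exactly as in \eqref{eq:main}. Everything else is routine von Dyck bookkeeping.
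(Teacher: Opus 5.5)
Your proposal is correct and follows the paper's own route. Von Dyck's theorem turns the relator memberships into the homomorphism $y_i\mapsto u_i\nc{R}$, and the memberships $v_j(u)x_j^{-1}\in\nc{R}$ give surjectivity; the converse comes from choosing words for the images and preimages of generators, and the checker is literal substitution plus free reduction of the supplied products \eqref{eq:derivation}. Your explicit treatment of the encoding, and of bounding every loop by the size of the witness, only spells out what the paper states in one line.
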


\begin{proof}
The first family of identities induces
\[
 f:H\longrightarrow G,\qquad y_i\longmapsto u_i\nc{R}.
\]
The second gives $f(v_j)=x_j\nc{R}$, hence $\im f=G$.
Conversely, choose words representing the images and preimages of
the generators under an epimorphism. Its defining identities lie
in $\nc{R}$ and therefore admit \eqref{eq:derivation}.
Substitution and free reduction of finite words, and the finite
integer and index checks in \eqref{eq:derivation}, are primitive recursive.
\end{proof}

\begin{proof}[Proof of Theorem~\ref{thm:main}]
Write $\mathsf A(d,r)$ for the margin condition
\eqref{eq:margin} on finite Laurent data $d$, and $P_{d,r}$ for
the presentation \eqref{eq:cover-t}--\eqref{eq:cover-minus}
at the radius \eqref{eq:constants}.
Let $\mathsf E(H,P;\eta)$ check the words and products in
Lemma~\ref{lem:epi}. For a decoded witness $c=(d,r,\eta)$, set
\begin{equation}
\label{eq:predicate}
\begin{split}
 V(P,c)=1\quad\Longleftrightarrow\quad&
 \mathsf{WF}(P)\ \land\\
 &\bigl(n(P)=0\ \lor\
 [\mathsf A(d,r)\land\mathsf E(P_{d,r},P;\eta)]\bigr),
\end{split}
\end{equation}
where $\mathsf{WF}$ checks the generating alphabet and $n(P)$ is its size.
All finite syntax checks are included in $\mathsf A$ and $\mathsf E$.
The radius is an integer expression in the data, and
\[
 \{v\in\Z^k:\norm v<R\}
 =
 \left\{v\in[-R,R]^k\cap\Z^k:\sum_i v_i^2<R^2\right\}.
\]
Consequently, constructing $P_{d,r}$ uses a bounded finite list.
Lemma~\ref{lem:margin} and Lemma~\ref{lem:epi} show that $V$
is primitive recursive.

For $n(P)>0$, soundness is the implication
\[
 V(P,c)=1
 \ \Longrightarrow\
 P_{d,r}''=1,\quad P_{d,r}\epito G(P)
 \ \Longrightarrow\ G(P)''=1,
\]
by Proposition~\ref{prop:sound}. For completeness,
Proposition~\ref{prop:cofinal} and Lemma~\ref{lem:margin} give
\[
 G(P)''=1
 \ \Longrightarrow\
 \exists d,r:\ \mathsf A(d,r),\quad P_{d,r}\epito G(P).
\]
Lemma~\ref{lem:epi} supplies $\eta$. If $n(P)=0$ and $P$ is
well formed, $G(P)=1$, as required by the first branch of
\eqref{eq:predicate}. This proves \eqref{eq:main}.
Finally, projection of the primitive-recursive relation
$\{(P,c):V(P,c)=1\}$ gives the asserted recursive enumeration.
\end{proof}

\section*{Foundation model assistance for Formalisation}
Theorem~\ref{thm:main} and its algebraic ingredients are formalised
in Lean~4 with assistance from GPT-Astra through a custom LangGraph harness.

\end{document}